\newtheorem{teo}{Theorem}[section]
\newtheorem{lema}[teo]{Lemma}
\newtheorem{prop}[teo]{Proposition}
\theoremstyle{definition}
\newtheorem*{xrem}{Remark}
\numberwithin{equation}{section}
\def\real{\mathbb{R}}
\def\supp{\mathop{\mbox{\normalfont supp}}\nolimits}
\begin{document}

\baselineskip=17pt

\title[Maximal functions associated to radial measures.]{On the lack of dimension free estimates in $L^p$ for maximal functions associated to radial measures.}

\author[A. Criado]{Alberto Criado}
\address{Alberto Criado\\ Departamento de Matem\'aticas, Universidad Aut\'onoma de Madrid\\ 28049 Madrid, Spain}
\email{alberto.criado@uam.es}

\date{}

\begin{abstract}
In a recent article J. Aldaz proved that the weak $L^1$ bounds for the centered maximal operator associated to finite radial measures cannot be taken independently with respect to the dimension. We show that at least for small $p$ near to 1 the same result holds for the $L^p$ bounds of such measures with decreasing densities. We also give some concrete examples, that include Gaussian measure, where better estimates with respect to the general case are obtained.
\end{abstract}

\subjclass[2000]{42B25}
\keywords{Maximal functions, radial measures, dimension free estimates}

\maketitle

\section{Introduction and statement of the main results.}

Consider a Borel measure $\mu$ on $\real^n$. For any $g\in L^1_{loc}(\real^n)$ we define the associated centered maximal function on balls as:
$$
M_\mu g(x)=\sup_{R>0} \frac1{\mu(B(x,R))}\int_{B(x,R)} \left|g(y)\right|\,d\mu(y),
$$
where $B(x,R)$ is the ball with respect to certain norm of radius $R$, centered at x.
When $\mu=m_n$, i.e. the Lebesgue measure, the behavior of maximal functions has been studied by various authors. E.M. Stein proved that $M_{m_n}$ is bounded on $L^p$ for $p>1$ with a constant that can be taken independent of the dimension (see \cite{Stein1}, \cite{Stein2}, and also \cite{Stein3}). J. Bourgain and A. Carbery extended this result in the range $p>\frac32$ to the maximal function associated with balls given by arbitrary norms of $\real^d$ (see \cite{Bourgain1}, \cite{Bourgain2}, \cite{Bourgain3} and \cite{Carbery}), and D. M\"uller \cite{Muller} showed that if we restrict ourselves to the balls resulting from the $l^q$ norms, there are uniform bounds in dimension, for every $p>1$.

\bigskip

For the $L^1$ weak type bounds, in a joint work, E.M. Stein and J.O. Str\"omberg
\cite{SteinStromberg} proved that the operator norm of $M_{m^n}$ grows at most
like $\mathcal O(n\log n)$, when we consider arbitrary balls; and at most like $\mathcal
O(n)$ in the special case where $B$ is the Euclidean ball. In a recent article by J.M. Aldaz \cite{Aldaz2}, it was proven that the weak type bounds for the maximal functions associated to cubes grow to infinity with the dimension. An explicit lower bound for the growth of the constants was obtained in \cite{CriadoSoria}. This has been further improved in \cite{Aubrun}.

\bigskip

We can formulate the same problem for the existence of dimension free bounds, in the situation where $\mu$ is a finite rotational invariant measure. We will only consider the case where $B$ is the Euclidean ball. In \cite{MenarguezSoria2} it was proved that when $\mu$ is the Lebesgue measure, $M_\mu$ is a weakly bounded operator on $L^1_{rad}(\mu)$ with a constant that can be taken independent of the dimension. The proof also applies in the case of a radially increasing measure $\mu$. In \cite{Aldaz} it is shown that whenever the measure $\mu$ is radial and finite, the best constant $C_{1,\mu}$ in the weak $L^1(\mu)$ inequality for $M_\mu$ grows exponentially to infinity with the dimension, even when we restrict ourselves to radial functions. In this work we show that a similar result holds for the best constants $C_{\mu,p}$ of the $L^p(\mu)$ inequalities of $M_\mu$, even if restricting the action to radial functions. This is the content of our main result:

\bigskip

\begin{teo}\label{theorem} There exists $p_0>1$ such that if $1\leq p<p_0$, there is an $\alpha>1$ so that for every $n\in\mathbb N$ and every finite Borel measure $\mu$ on $\real^n$, with a radially decreasing density, one has
$$
C_{\mu,p} > c \alpha^n,
$$
where $c$ is an absolute constant, independent of the dimension.
\end{teo}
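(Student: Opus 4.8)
The plan is to derive the $L^p$ statement from a weak-type $(1,1)$ bound of the type already established in \cite{Aldaz}, losing only a factor $\lambda^{p-1}$ in a weak-to-$L^p$ passage, where $\lambda$ is the relevant level; it is the need to absorb this loss into the exponential weak-type gain that will force $p$ to lie below some $p_0>1$. Since $\mu$ is rotation invariant, $M_\mu$ commutes with rotations, so it is enough to bound $M_\mu$ from below on radial $L^p(\mu)$, and accordingly I would test only on indicators $g=\mathbf 1_{B(0,r)}$ of centered balls. For such $g$ one has the convenient identity $\|g\|_{L^p(\mu)}^p=\mu(B(0,r))=\|g\|_{L^1(\mu)}$, together with the trivial inequality $\|M_\mu g\|_{L^p(\mu)}^p\ge\lambda^p\,\mu(\{M_\mu g>\lambda\})$, valid for any $\lambda>0$ since $M_\mu g\ge 0$.

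The core is the following quantitative input, which I would obtain by revisiting the construction behind \cite{Aldaz}: for every $n$ and every finite radially decreasing $\mu$ there are a radius $r_n$ and a level $\lambda_n$ such that
\[
\lambda_n\,\mu\bigl(\{M_\mu\mathbf 1_{B(0,r_n)}>\lambda_n\}\bigr)\ \ge\ \alpha_1^{\,n}\,\mu(B(0,r_n)),\qquad \lambda_n\ \ge\ \beta^{\,n},
\]
with absolute constants $\alpha_1>1$ and $0<\beta<1$. The first inequality is the exponential weak-type phenomenon of \cite{Aldaz}; the mechanism is that, choosing $r_n$ so that $\mu(B(0,r_n))$ is a fixed exponentially small fraction of $\mu(\real^n)$ and letting the center $x$ range over a set of $\mu$-measure comparable to $\mu(\real^n)$, one estimates $M_\mu\mathbf 1_{B(0,r_n)}(x)$ from below by using a Euclidean ball $B(x,R(x))$ through (or nearly through) the origin: this ball still captures a non-negligible portion of $\mu(B(0,r_n))$, but its total $\mu$-mass is exponentially smaller than $\mu(\real^n)$, because a ball through the origin meets each sphere $\{|y|=s\}$ (with $s$ comparable to $R(x)$) in a spherical cap of exponentially small relative measure, and for a radially decreasing density the bulk of the mass sits on such spheres. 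The extra point, beyond the bare statement of \cite{Aldaz}, is that the value $\lambda_n\approx(\text{captured fraction})\cdot\mu(B(0,r_n))/\mu(B(x,R(x)))$ coming out of this configuration is itself only exponentially small, i.e.\ $\lambda_n\ge\beta^n$; this has to be read off from the estimates above.

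Granting this, the theorem follows: testing on $g=\mathbf 1_{B(0,r_n)}$,
\[
C_{\mu,p}^{\,p}\ \ge\ \frac{\|M_\mu g\|_{L^p(\mu)}^p}{\mu(B(0,r_n))}\ \ge\ \frac{\lambda_n^{\,p}\,\mu(\{M_\mu g>\lambda_n\})}{\mu(B(0,r_n))}\ \ge\ \lambda_n^{\,p-1}\,\alpha_1^{\,n}\ \ge\ (\beta^{\,p-1}\alpha_1)^{\,n}
\]
for $p\ge 1$ (the case $p=1$ being anyway trivial). Setting $p_0=1+\dfrac{\log\alpha_1}{\log(1/\beta)}$, one has $\beta^{p-1}\alpha_1>1$ for every $p\in[1,p_0)$, and hence $C_{\mu,p}>c\,\alpha^n$ with $c=\tfrac12$ and $\alpha=(\beta^{\,p-1}\alpha_1)^{1/p}>1$. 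The main obstacle is the uniform geometric estimate of the middle paragraph: the way $\mu(B(x,R(x)))$ compares with $\mu(\real^n)$ depends on how the radial mass of $\mu$ is distributed across scales, so $r_n$, the set of admissible centers, and the family of comparison balls must be chosen to work simultaneously for all radially decreasing densities — including profiles concentrated at a very small scale (essentially Lebesgue measure on a ball after rescaling) and widely spread profiles such as the Gaussian one, where the natural scales differ — while still delivering the level bound $\lambda_n\ge\beta^n$; carrying the spherical-cap concentration estimates through uniformly is where the real work lies.
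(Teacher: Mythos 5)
Your reduction of the $L^p$ statement to a quantitative weak-type estimate is exactly the paper's Proposition~\ref{proposicion} in disguise: testing on the indicator of a small centered ball $B_r$, taking the level $\lambda_n=\mu(B_r)/\mu(\tilde B)$ with $\tilde B=B(R\xi,R+r)$ (so that the superlevel set contains all of $B_R$, by comparing $B(x,|x|+r)$ with $\tilde B$ via rotation invariance), and then paying $\lambda_n^{p-1}$ in the Tchebychev passage reproduces the bound $C_{\mu,p}\geq \frac{\mu(B_R)}{\mu(\tilde B)}\left(\frac{\mu(B_r)}{\mu(B_R)}\right)^{(p-1)/p}$, and your formula $p_0=1+\log\alpha_1/\log(1/\beta)$ is the same balancing of exponential gain against exponential loss in the level that produces the paper's $p_0$. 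So the framework is sound and matches the paper's.

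The genuine gap is that your ``core quantitative input'' --- the existence, uniformly over all radially decreasing finite $\mu$ and all $n$, of radii for which the weak-type gain is $\alpha_1^n$ while the level is still $\geq\beta^n$ with absolute $\alpha_1>1$, $\beta<1$ --- is precisely the entire content of the paper's proof of Theorem~\ref{theorem}, and your sketch does not establish it. The spherical-cap mechanism you describe only yields $\mu(\tilde B\cap B_R)\leq\mu(B_{R\sin\beta_0})$; for a \emph{general} radially decreasing density this is useless at a generic scale, since $\mu(B_{R\sin\beta_0})/\mu(B_R)$ can be arbitrarily close to $1$ (mass concentrated deep inside $B_{R\sin\beta_0}$) rather than of size $(\sin\beta_0)^n$. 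The paper resolves this with an intermediate-value selection of $R$: one takes the \emph{maximal} $R$ with $\mu(B_{R\sin\beta_0})=(\sin\beta_0)^{nk}\mu(B_R)$, which simultaneously (i) makes the cap bound exponentially effective, and (ii) via maximality controls the outer piece $\mu(\tilde B\setminus B_R)$ through $\mu(B_{2R+r})\leq(\sin\beta_0)^{-ln}\mu(B_R)$; the exponent $k=1/(1+l)$ is then fixed by balancing the two contributions. Your proposal neither selects the scale this way nor handles the part of $\tilde B$ outside $B_R$ at all, and you yourself flag that the uniform estimate ``is where the real work lies'' --- so what is submitted is a correct reduction plus an accurate description of the difficulty, not a proof. (A smaller point: $\lambda_n\geq\beta^n$ also needs $\mu(B_R)/\mu(\tilde B)$ bounded below, which again comes only from the same missing estimate.)
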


\bigskip

This means that E.M. Stein's result of dimension free $L^p$ bounds for maximal functions associated to Euclidean balls is not extendable to the context of radial finite measures.

\bigskip

For any $r>0$ we and $x\in\real^n$ denote by $B(x,r)$ the ball centered at $x$ with radius $r$. We will write $B_r$ to denote $B(0,r)$. By $\mathbf \xi$ we will denote a unit vector of $\real^n$, arbitrary since our setting is rotational invariant. The proof of Theorem \ref{theorem} relies on the following proposition:

\bigskip

\begin{prop}\label{proposicion} Let $\mu$ be a finite Borel measure on $\real^n$ with a radially decreasing density. Given $R>r>0$ two positive radii, and $\xi$ a unitary vector, write $\tilde B = B(R\mathbf \xi, R+r)$. Then,
\begin{equation}\label{estimacion.constante}
C_{\mu,p}\geq \frac{\mu(B_R)}{\mu(\tilde B)} \left(\frac{\mu(B_r)}{\mu(B_R)}\right)^{\frac{p-1}p}=:T_{\mu,p}(R,r).
\end{equation}
\end{prop}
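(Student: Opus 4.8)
The plan is a direct test-function argument: since $C_{\mu,p}$ is by definition the norm of $M_\mu$ on $L^p(\mu)$, it is enough to exhibit one function $g\in L^p(\mu)$ and one set $E$ on which $M_\mu g$ is pointwise large, and then compare $\|M_\mu g\|_{L^p(\mu)}$ with $\|g\|_{L^p(\mu)}$. The natural choice here is $g=\chi_{B_r}$ and $E=B_R$. Since $\mu$ is finite, $\|g\|_{L^p(\mu)}^p=\mu(B_r)$. The point of using $B_r$ is that it is averaged over by balls centered anywhere in $B_R$: if $x\in B_R$ and $t=|x|\le R$, then $B_r\subseteq B(x,t+r)$, because $|y|<r$ forces $|y-x|\le|y|+|x|<r+t$. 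Hence, choosing the ball $B(x,t+r)$ in the supremum defining $M_\mu g(x)$,
$$
M_\mu g(x)\ \ge\ \frac{\mu\big(B_r\cap B(x,t+r)\big)}{\mu(B(x,t+r))}\ =\ \frac{\mu(B_r)}{\mu(B(x,t+r))}.
$$

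Next comes the one geometric observation needed. I would bound $\mu(B(x,t+r))\le\mu(\tilde B)$ uniformly for $x\in B_R$. Writing $\xi_x=x/|x|$, there is the inclusion $B(x,t+r)\subseteq B(R\xi_x,R+r)$: if $|y-x|<t+r$, then $|y-R\xi_x|\le|y-x|+|x-R\xi_x|<(t+r)+(R-t)=R+r$, using $t\le R$. By rotational invariance of $\mu$ we have $\mu(B(R\xi_x,R+r))=\mu(B(R\xi,R+r))=\mu(\tilde B)$, so $\mu(B(x,t+r))\le\mu(\tilde B)$ and therefore $M_\mu g(x)\ge \mu(B_r)/\mu(\tilde B)$ for every $x\in B_R$. (Only rotation invariance of $\mu$ is used for this proposition; the monotonicity of the density is not needed here.)

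Finally I would integrate and rearrange. Integrating the last bound over $B_R$ gives $\|M_\mu g\|_{L^p(\mu)}^p\ge\mu(B_R)\,(\mu(B_r)/\mu(\tilde B))^p$, hence
$$
C_{\mu,p}^p\ \ge\ \frac{\|M_\mu g\|_{L^p(\mu)}^p}{\|g\|_{L^p(\mu)}^p}\ \ge\ \frac{\mu(B_R)\,\mu(B_r)^{p-1}}{\mu(\tilde B)^p},
$$
and taking $p$-th roots and rewriting the exponents yields $C_{\mu,p}\ge \mu(B_R)^{1/p}\mu(B_r)^{(p-1)/p}/\mu(\tilde B)=\frac{\mu(B_R)}{\mu(\tilde B)}\big(\frac{\mu(B_r)}{\mu(B_R)}\big)^{(p-1)/p}=T_{\mu,p}(R,r)$. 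Degenerate cases are harmless ($\mu(B_r)=0$ makes the inequality trivial, and $\mu(\tilde B)>0$ automatically since $0\in\tilde B$), and the case $p=1$ is read off by setting the exponent $(p-1)/p$ to $0$. There is no deep obstacle in this proof; the only thing worth getting exactly right is the double inclusion $B_r\subseteq B(x,|x|+r)\subseteq \tilde B$ valid for all $x\in B_R$, after which everything is routine manipulation of $L^p$ norms.
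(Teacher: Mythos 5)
Your proposal is correct and is essentially the paper's own argument: the same test function (the indicator of $B_r$, which the paper merely normalizes), the same key inclusions $B_r\subseteq B(x,|x|+r)\subseteq B(R\,x/|x|,R+r)$ combined with rotation invariance, and the same comparison of $\|M_\mu g\|_{L^p(\mu)}$ with $\|g\|_{L^p(\mu)}$ (the paper phrases the final step via Chebyshev on the superlevel set containing $B_R$, which is just your direct integration in disguise). You also correctly note that only rotation invariance, not monotonicity of the density, is used here.
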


\bigskip

In section \ref{proof}, Theorem \ref{theorem} and Proposition \ref{proposicion} are proven. In sections \ref{gaussian} and \ref{unit.ball} we study two particular cases, the Gaussian measure and Lebesgue measure restricted to the unit ball, where it is possible to develop explicit computations of $T_{\mu,p}(R,r)$. In both cases, we will obtain better exponents $p$ than in Theorem \ref{theorem} and will show too that the estimates are somehow optimal, in the sense that the argument given by Proposition \ref{proposicion} cannot be much extended.

\bigskip

We have learned from J.M. Aldaz that he is also working in this problem and that in particular, he has independently found similar results to those in our Theorem \ref{theorem}.

\bigskip

\noindent\textit{Some notation and preliminary facts:} For any $\delta\in[-1,1]$, we define the cone with $\delta$ aperture as $E_\delta :=\left\{x\in\real^n :\,x\cdot\xi\geq\delta|x|\right\}$. Let $\omega_{n-1}$ be the measure of $\mathbb S^{n-1}$ with respect to the surface measure induced by the Lebesgue measure on $\real^n$. It is a known fact that $\omega_{n-1}=n|B(0,1)|=n\pi^\frac n2/\Gamma(\frac n2+1)$. We shall use repetitively that
\begin{equation}\label{cociente.esferas}
\frac1{\sqrt\pi}\frac{n-1}n\leq\frac{\omega_{n-2}}{\omega_{n-1}}\leq \frac{n-1}{\sqrt{2\pi}}\left(1+\frac1n\right)^\frac12,
\end{equation}
where the first inequality follows immediately from the definition and the second one is a consequence of the log-convexity of $\Gamma$ (see \cite{Aldaz} or \cite{Webster}).

\bigskip

\section{Proofs of the theorem and the proposition.}\label{proof}

\bigskip

\begin{proof}[Proof of Proposition \ref{proposicion}]
Consider the function $g(x)=\frac1{\mu(B_r)}\chi_{B_r}(x)$. By the Tchebychev inequality a strong $L^p$ bound implies a weak one:
\begin{eqnarray}\label{chebychev}
\mu\left(\left\{x:\,Mg(x)>\frac 1{\mu(\tilde B)}\right\}\right) &\leq& \frac1{\mu(\tilde B)^p} \int Mg(x)^p\,d\mu(x)\nonumber\\ &\leq& \frac {C_{\mu,p}^p}{\mu(\tilde B)^p} \int g(x)^p\,d\mu(x).
\end{eqnarray}
We claim that
\begin{equation}\label{bola.en.conjunto.nivel}
B_R \subset \left\{x:\,Mg(x)>\frac 1{\mu(\tilde B)}\right\},
\end{equation}
so, rearranging (\ref{chebychev}) we obtain:
\begin{equation}
C_{\mu,p}\geq \frac{\mu(B_R)}{\mu(\tilde B)} \left(\frac{\mu(B_r)}{\mu(B_R)}\right)^{\frac{p-1}p}.
\end{equation}
To prove (\ref{bola.en.conjunto.nivel}), take any $x$ in $B_R$, then
\begin{eqnarray*}
Mg(x)&\geq& \frac1{\mu(B(x,|x|+r))}\int_{B(x,|x|+r))}g(x)\,d\mu(x)\\ &\geq& \frac1{\mu\left(B\left(R\frac x{|x|},R+r\right)\right)}= \frac1{\mu(\tilde B)}.
\end{eqnarray*}
Here we have used the rotation invariance of $\mu$.
\end{proof}

\bigskip

\begin{proof}[Proof of theorem \ref{theorem}] We are going to bound $T_{\mu,p}(R,r)$ from below. Write $d\mu(x)=f(|x|)\,dx$. Set $\lambda:= \frac rR$. By our hypothesis
the function $f$ is decreasing, so
\begin{equation}\label{compara.bolas.pequeñas}
\frac{\mu(B_R)}{\mu(B_r)} \leq 1 +\frac{\int_r^R f(r) s^{n-1}\,ds}{\int_0^r f(r)s^{n-1}\,ds} = \lambda^{-n}.
\end{equation}

\bigskip

 Now we compare the $\mu$-measures of $\tilde B$ and $B_R$. Following Aldaz, we split $\tilde B$ into two disjoint pieces,
\begin{equation}\label{partir.bola.grande}
\mu(\tilde B)=\mu(\tilde B\cap B_R)+\mu(\tilde B\setminus B_R).
\end{equation}
We denote by $\beta_0(r)$ the angle between $\mathbf \xi$ and the segment that connects the origin with any point in $\partial \tilde B\cap \partial \tilde B_r$. For notational simplicity, call $\beta_0 := \beta_0(R)$.

\bigskip

Using that $\tilde B\cap B_R \subset B(R \cos \beta_0 \mathbf \xi, R \sin \beta_0)$ and that $\mu$ is a radially decreasing measure,
\begin{equation}\label{estimacion.interseccion}
\mu(\tilde B\cap B_R)\leq \mu(B(R \cos \beta_0 \mathbf \xi, R \sin \beta_0)) \leq \mu(B_{R \sin \beta_0}).
\end{equation}
Given that $R\mapsto \frac{\mu(B_{R\sin \beta_0})}{\mu(B_R)}$ is a continuous function that tends to 1 when $R\rightarrow\infty$ and by Lebesgue differentiation theorem to $\sin^n \beta_0$ when $R\rightarrow0$, it is possible to find an $R$ such that:
\begin{equation}\label{cociente.bolas.exacto}
\mu(B_{R\sin \beta_0}) =(\sin\beta)^{nk} \,\mu(B_R),
\end{equation}
where $k\in(0,1)$ depends on $\lambda$ and will be chosen later.

\bigskip

The cosine theorem applied to the triangle whose vertices are the origin, $R\mathbf \xi$, and a point in $\partial \tilde B\cap \partial B_R$ yields
$$
\cos \beta_0= 1-\frac{(1+\lambda)^2}2.
$$
By choosing $\lambda< \sqrt 2-1$, we make $\cos \beta_0 >0$. By integrating in spherical coordinates,
\begin{eqnarray}\label{cuenta.integral}
\mu(\tilde B \setminus B_R) &=& \omega_{n-2} \int_R^{2R+r} \int_0^{\beta_0(r)} (\sin\beta)^{n-2}\,d\beta\, f(s)s^{n-1}\,ds \nonumber\\ &\leq& \frac{\omega_{n-2}}{\cos \beta_0} \int_R^{2R+r} \int_0^{\beta_0} (\sin\beta)^{n-2} \cos\beta\,d\beta\, f(s)s^{n-1}\,ds \nonumber\\ &=& \frac{\omega_{n-2}}{\cos\beta_0}\frac{(\sin\beta_0)^{n-1}}{n-1} \int_R^{2R+r} f(s)s^{n-1}\,ds \nonumber\\ &\leq& \frac1{\sqrt{\pi}\,\sin\beta_0\,\cos\beta_0} (\sin\beta)^n \,\mu(B_{2R+r}\setminus B_R),
\end{eqnarray}
where for the last inequality we used (\ref{cociente.esferas}). As $\sin \beta_0 <1$, it holds that $(\sin\beta_0)^{-l}\,\,R>2R+r$ for a big enough positive integer $l$. For example the choice
\begin{equation}
l=\left\lceil-\frac{\log (2+\lambda)}{\log \sin \beta_0}\right\rceil,
\end{equation}
will do; so if we assume $R$ to be the maximal $R>0$ for which (\ref{cociente.bolas.exacto}) is satisfied, then
\begin{equation}\label{estimacion.bola.exterior}
\mu(B_{2R+r})\leq \mu(B_{(\sin\beta_0)^{-l}\, R}) \leq (\sin\beta_0)^{-ln} \mu(B_R).
\end{equation}

\bigskip

Putting together (\ref{partir.bola.grande}), (\ref{cociente.bolas.exacto}), (\ref{cuenta.integral}) and (\ref{estimacion.bola.exterior}) we get,
\begin{equation}\label{exponentes.distintos}
\frac{\mu(B_R)}{\mu(\tilde B)} \geq \frac1{Q\left(\sin\beta_0\right)^{n(1-lk)} + \left(\sin \beta_0\right)^{nk}},
\end{equation}
where $Q=(\sqrt{\pi}\,\sin\beta_0\,\cos\beta_0)^{-1}$. The right hand side of (\ref{exponentes.distintos}) attains its maximal growth with respect to $n$ when the two terms in the denominator are of the same exponential size, so we need that $1-lk=k$; this fixes $k=1/(1+l)$. By (\ref{cociente.bolas.exacto}) and the observation before (\ref{estimacion.bola.exterior}) it also determines $R$, and, as $\lambda$ was previously chosen, $r$ gets fixed too.

\bigskip

Using (\ref{exponentes.distintos}) with $1-lk=k$ and (\ref{compara.bolas.pequeñas}) on (\ref{estimacion.constante}) we obtain:
$$
C_{\mu,p} \geq T_{\mu,p}(R,r) \geq \frac 1{Q+1} \left(\frac {\lambda^{\frac{p-1}p}}{\sin^k\beta_0}\right)^n.
$$
It only remains to observe that, although $R$ (and consequently $r$) can change with the dimension (see the remark below), it is possible to choose an universal $\lambda$, so that neither $k,l$ and $\beta_0$ depend on $n$, and so,
$$
\alpha = \frac {\lambda^{\frac{p-1}p}}{\sin^k\beta_0}>1,
$$
will hold for $p$ close enough to 1, namely when
$$
p< \frac{\log \lambda}{\log\left(\frac \lambda{\sin^k\beta_0}\right)}.
$$
So just take
$$
p_0=\sup_{\lambda\in (0,\sqrt2-1)}\frac{\log \lambda}{\log\left(\frac \lambda{\sin^k\beta_0}\right)}.
$$
The analytic computation of $p_0$ is a rather complicated calculation. By a numerical estimate via MatLab we obtained $p_0\approx1.005274$.
\end{proof}

\bigskip

\begin{xrem} It is interesting to make the following observations about the radii chosen above. Let $f:\real_+\rightarrow\real_+$ be a decreasing function such that all the measures $d\mu_n=f(|x|)\,dx$ are finite on $\real^n$. Fixing $\lambda$ and taking $\beta_0$ as in the previous proof, we define $R_n$ as the maximal radius for which (\ref{cociente.bolas.exacto}) holds for $\mu_n$ in $\real^n$. This radius does not shrink to $0$ as the dimension grows. Given $R_0>0$ such that $f(R_0)>0$, note that since $f$ is decreasing
$$
\frac{\mu(B_{R_0\sin \beta_0})}{\mu(B_{R_0})} \leq \frac{f(0)}{f(R_0)}(\sin\beta_0)^n,
$$
which is smaller than $(\sin\beta)^{nk}$ for large $n$, thus for (\ref{cociente.bolas.exacto}) it is necessary to take $R_n>R_0$. So if $f$ has compact support then $\liminf_{n\rightarrow\infty} R_n\geq \max \supp f$, and if the support of $f$ is unbounded $\lim_{n\rightarrow\infty}R_n=\infty$. By the definition of $R_n$ and the decreasing property of $f$,
$$
(\sin\beta_0)^{nk}=\frac{\mu(B_{R_n\sin \beta_0})}{\mu(B_{R_n})} \leq \frac{f(0)}{f(R_n)}(\sin\beta_0)^n,
$$
so we obtain the exponential decaying $f(R_n)\leq f(0)(\sin\beta_0)^{n(1-k)}$.
\end{xrem}
\bigskip

\section{The Gaussian measure.}\label{gaussian}

\bigskip

In the case of the Gaussian measure $d\mu(x)=e^{-\pi|x|^2}\,dx$ it is possible to make a better estimate of the quantities implied in $T_{\mu,p}(R,r)$. We will obtain unboundedness with respect to the dimension of the $L^p(\mu)$ norms with bigger $p$ than those in Theorem \ref{theorem} and we will show that, by means of Proposition \ref{proposicion}, no much bigger exponents $p$ can be reached.

\bigskip

\begin{prop}\label{proposicion.gaussiana}
Let $R_n:=\sqrt{\frac{n-1}{2\pi}}$. There exist $p_1>p_0>1$ with approximated value $p_0\approx1.011871$ and $p_1\approx1.049427$ such that
\begin{enumerate}[(i)]
\item for every $p<p_0$ there exists an $\alpha>1$ (only depending on $p$) such that $C_{\mu,p}\geq T_{\mu,p}(R,r)\geq \alpha^n$ for some $0<r<R< R_n$ with $\lambda=\frac Rr<\sqrt2-1$.
\item for every $p>p_1$ given any choice $0<r<R\leq R_n$ there exists an $\alpha<1$ (only depending on $p$) such that $T_{\mu,p}(R,r)\leq \sqrt\pi n \alpha^n$.
\end{enumerate}
\end{prop}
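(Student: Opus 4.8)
The plan is to compute, up to polynomial factors in $n$, the exponential rate of
$T_{\mu,p}(R,r)=\mu(B_R)^{1/p}\mu(B_r)^{(p-1)/p}\mu(\tilde B)^{-1}$ in the regime $R\le R_n$, and then optimise that rate over the two scale parameters $c=R/R_n\in(0,1]$ and $\lambda=r/R$. Write $\phi(s)=e^{-\pi s^2}s^{n-1}$; since $R_n$ is exactly the maximiser of $\phi$, on $[0,R_n]$ the function $\phi$ is increasing, so $\mu(B_\rho)=\omega_{n-1}\int_0^\rho\phi$ concentrates at the outer radius. A Laplace/saddle‑point analysis of this integral (equivalently, large‑deviation bounds for the lower incomplete Gamma function), done with explicit Stirling constants, gives for $u\in(0,1]$
$$
\mu(B_{uR_n})\ \approx\ \psi(u)^n,\qquad \psi(u):=u\,e^{(1-u^2)/2}\in(0,1]\qquad(\psi(u):=1\text{ for }u\ge1),
$$
with $\psi$ increasing on $(0,1]$. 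I would establish this first, tracking the polynomial factors, since the constant $\sqrt\pi\,n$ in (ii) depends on them.

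The key and most delicate input is an estimate for $\mu(\tilde B)$, $\tilde B=B(R\xi,R+r)$. Integrating in spherical coordinates, $\mu(\tilde B)=\mu(B_r)+\omega_{n-2}\int_r^{2R+r}A_n(\beta_0(s))\phi(s)\,ds$, where $A_n(\beta)=\int_0^\beta(\sin\gamma)^{n-2}d\gamma$ and, by the cosine rule, $\cos\beta_0(s)=\frac{s^2-r(2R+r)}{2Rs}$. Using that $A_n(\beta)$ has exponential order $(\sin\beta)^n$ for $\beta\le\pi/2$ and order $1$ for $\beta>\pi/2$, a second Laplace analysis yields $\mu(\tilde B)\approx M(c,\lambda)^n$, where $M(c,\lambda)$ is the maximum over the admissible shell radius $s=uR_n$ of $\psi(u)$ times the relevant power of $\sin\beta_0(s)$ (or of $1$). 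For part (i) it is enough to bound $\mu(\tilde B)$ from above, and this can be done without computing $M$: use the splitting $\mu(\tilde B)=\mu(\tilde B\cap B_R)+\mu(\tilde B\setminus B_R)$ exactly as in the proof of Theorem \ref{theorem}, but now feeding the sharp asymptotics of $\mu(B_{R\sin\beta_0})$ and $\mu(B_{2R+r})$ into it (this needs $\cos\beta_0>0$, i.e.\ $\lambda<\sqrt2-1$). For part (ii) one instead needs a matching \emph{lower} bound $\mu(\tilde B)\gtrsim M(c,\lambda)^n/n^{O(1)}$, got by restricting the shell integral to a short window about its maximising radius and applying Laplace there with polynomial control \emph{uniform} in $(c,\lambda)$. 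This is where I expect the real work: the uniformity is delicate, and the maximising radius can sit near $\beta_0=\pi/2$ or near an endpoint of the $u$‑range, so the window and error terms must be chosen carefully.

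Assembling the pieces, $T_{\mu,p}(R,r)$ has exponential base (up to $n^{O(1)}$) equal to $\Phi_p(c,\lambda)=\psi(c)^{1/p}\psi(\lambda c)^{(p-1)/p}/[\text{rate of }\mu(\tilde B)]$, and since $\psi(c)^{1/p}\psi(\lambda c)^{(p-1)/p}=\psi(c)\bigl(\psi(\lambda c)/\psi(c)\bigr)^{(p-1)/p}$ with $\psi(\lambda c)/\psi(c)<1$, this base is strictly decreasing in $p$. For (i), given $p<p_0$ I would exhibit a pair $(c,\lambda)$ with $c<1$, $\lambda<\sqrt2-1$ for which the version of $\Phi_p$ carrying the cone upper bound in the denominator exceeds $1$; the base being then $>1$ with room to spare, the polynomial losses are absorbed for large $n$ (the few small $n$, where $R_n$ is tiny, costing at most an absolute constant as in Theorem \ref{theorem}), and $C_{\mu,p}\ge T_{\mu,p}(R,r)\ge\alpha^n$ follows. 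Thus $p_0$ is the supremum of the $p$ for which this cone‑based rate can be made $>1$; e.g.\ restricting to $c\uparrow1$ already gives $p_0\ge\sup_{\lambda<\sqrt2-1}\dfrac{\log\psi(\lambda)}{\log\psi(\lambda)-\log\psi(\sin\beta_0)}$, with $\cos\beta_0=1-\tfrac12(1+\lambda)^2$, although the actual optimum moves $c$ off $1$. For (ii), in the range where Proposition \ref{proposicion}'s argument applies ($\lambda<\sqrt2-1$), I would show that for $p>p_1$ one has $\sup_{0<c\le1,\,0<\lambda<\sqrt2-1}\Phi_p(c,\lambda)<1$ with the genuine rate $M(c,\lambda)$ in the denominator, so $T_{\mu,p}(R,r)\le\sqrt\pi\,n\,\alpha^n$ with $\alpha:=\sup\Phi_p<1$; here $p_1$ is the value at which this supremum equals $1$. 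The inequality $p_0\le p_1$ is immediate because the cone upper bound for $\mu(\tilde B)$ dominates $M(c,\lambda)^n$, and the strict gap $p_0<p_1$ quantifies exactly the loss in the cone bound; since both thresholds solve transcendental equations, the argument ends with their numerical evaluation, giving $p_0\approx1.011871$ and $p_1\approx1.049427$.
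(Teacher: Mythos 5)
Your overall strategy for part (i) is sound and essentially coincides with the paper's: your normalised profile $\psi(u)=u\,e^{(1-u^2)/2}$ is exactly what Lemma \ref{estimacion.gaussiana} encodes (up to a factor $n$, since $\omega_{n-1}e^{-\pi u^2R_n^2}(uR_n)^n\approx\psi(u)^n$ up to polynomial terms), the splitting of $\tilde B$ into the cap $\tilde B\cap B_R$ and the shell $\tilde B\setminus B_R$ is the same as in Theorem \ref{theorem}, and the optimisation that moves $c=R/R_n$ off $1$ is carried out in the paper by (approximately) solving $e^{-\pi R^2\sin^2\beta_0}R^n=e^{-\pi R_n^2}R_n^{n-1}$, which gives $R=e^{-\frac12\cos^2\beta_0}R_n$ and the stated formula for $p_0$.

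For part (ii) you diverge, and in a way that leaves genuine gaps. First, you insist on a matching lower bound for $\mu(\tilde B)$ with the \emph{genuine} exponential rate $M(c,\lambda)$ via a Laplace argument uniform in $(c,\lambda)$; as you concede, that uniformity is the hard part, and it is also unnecessary: the paper only uses the elementary cone bound $\mu(\tilde B)\ge\mu(E_{\cos\beta_0}\cap B_R)\ge\frac{(\sin\beta_0)^{n-1}}{\sqrt\pi\,n}\,\mu(B_R)$ (one integration in spherical coordinates plus \eqref{cociente.esferas}) together with $\mu(B_r)/\mu(B_R)\le e^{\pi R^2(1-\lambda^2)}\lambda^n$ from Lemma \ref{estimacion.gaussiana}, and this already gives $T_{\mu,p}(R,r)\le\sqrt\pi n\,\alpha^n$. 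Second, the quoted value $p_1\approx1.049427$ is the threshold produced by precisely that cone bound, namely $p_1=\sup_{0<\lambda<\sqrt2-1}\log\bigl(e^{\frac{1-\lambda^2}2}\lambda\bigr)/\log\bigl(e^{\frac{1-\lambda^2}2}\lambda/\sin\beta_0\bigr)$; since $M(c,\lambda)\ge\sin\beta_0\,\psi(c)$ (the cap alone already satisfies $\mu(B_{R\sin\beta_0})\approx\psi(c\sin\beta_0)^n>(\sin\beta_0\,\psi(c))^n$), your $M$-based threshold is in general strictly smaller, so attaching the number $1.049427$ to it is unjustified without actually computing $M$. Third, statement (ii) quantifies over all $0<r<R\le R_n$, while your argument is confined to $\lambda<\sqrt2-1$; the range $\lambda\ge\sqrt2-1$ must be treated separately (the paper does this in the remark preceding its proof, using $\tilde B\supset E_0\cap B_R$ so that $\mu(\tilde B)\ge\frac12\mu(B_R)$). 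None of this is a conceptual obstruction, but as written the proof of (ii) is incomplete and, as a route to the stated $p_1$, both harder and less direct than necessary.
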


\bigskip

Let us comment some aspects on the behavior of Gaussian measures. By an integration in polar coordinates the Gaussian measure of a centered ball $B_\rho$ can be written as
$$
\mu(B_\rho)=\omega_{n-1}\int_0^\rho e^{-\pi s^2} s^{n-1}\,ds.
$$
It is an elementary calculus exercise to realize that the function $h_n(s)=e^{-\pi s^2} s^{n-1}$ increases from $s=0$ until the point $s=R_n:=\sqrt{\frac{n-1}{2\pi}}$ where it attains its maximum and from this point on, to infinity it decreases. Moreover $h$ is concave in the interval $(R_n^-,R_n^+)$, and convex in the two complementary intervals of $(0,\infty)$, where $R_n^\pm = \sqrt{\frac{2n-1\pm \sqrt{8n-7}}{4\pi}}$. This gives us the following estimates for $B_\rho$ when $\rho\leq R_n$.

\bigskip

\begin{lema}\label{estimacion.gaussiana} Let $0<\rho<R_n$, one has
$\omega_{n-1} e^{-\pi \rho^2}\frac{\rho^n}n\leq \mu(B_\rho)\leq \omega_{n-1} e^{-\pi \rho^2}\rho^n$.
\end{lema}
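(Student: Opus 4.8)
The plan is to estimate $\mu(B_\rho) = \omega_{n-1}\int_0^\rho h_n(s)\,ds$ where $h_n(s) = e^{-\pi s^2}s^{n-1}$, exploiting the stated monotonicity of $h_n$ on $(0,R_n)$. Since $\rho < R_n$, the function $h_n$ is increasing on $[0,\rho]$, so $h_n(s) \leq h_n(\rho) = e^{-\pi\rho^2}\rho^{n-1}$ for all $s \in [0,\rho]$. Integrating this crude bound over $[0,\rho]$ immediately gives the upper estimate
\begin{equation}\label{gauss.upper}
\mu(B_\rho) \leq \omega_{n-1}\, e^{-\pi\rho^2}\rho^{n-1}\cdot\rho = \omega_{n-1}\, e^{-\pi\rho^2}\rho^n,
\end{equation}
which is the right-hand inequality.

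For the lower bound I would compare $h_n(s)$ with the purely monomial $s^{n-1}$. On $[0,\rho]$ the factor $e^{-\pi s^2}$ is decreasing, hence $e^{-\pi s^2} \geq e^{-\pi\rho^2}$ throughout that interval, so $h_n(s) = e^{-\pi s^2}s^{n-1} \geq e^{-\pi\rho^2}s^{n-1}$. Integrating,
\begin{equation}\label{gauss.lower}
\mu(B_\rho) = \omega_{n-1}\int_0^\rho h_n(s)\,ds \geq \omega_{n-1}\, e^{-\pi\rho^2}\int_0^\rho s^{n-1}\,ds = \omega_{n-1}\, e^{-\pi\rho^2}\frac{\rho^n}{n},
\end{equation}
which is the left-hand inequality. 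Combining \eqref{gauss.upper} and \eqref{gauss.lower} yields the claim.

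Neither direction presents a genuine obstacle; the only thing to be careful about is invoking the correct monotonicity facts on the correct intervals, and making sure the hypothesis $\rho < R_n$ is used precisely where $h_n$ must be increasing (for the upper bound). In fact both bounds only use that $e^{-\pi s^2}$ is decreasing and that $h_n$ is increasing on $[0,\rho]$ — the convexity/concavity information about $h_n$ recorded before the lemma is not needed here and is presumably kept in reserve for the proof of Proposition \ref{proposicion.gaussiana}. If one wanted a sharper version, the concavity of $h_n$ near $R_n$ could be used to improve the constant $1/n$, but for the purpose of the lemma the elementary sandwiching above suffices.
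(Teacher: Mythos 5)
Your proof is correct and coincides with the paper's own argument: the upper bound via the monotonicity of $h_n(s)=e^{-\pi s^2}s^{n-1}$ on $[0,\rho]$ (using $\rho<R_n$), and the lower bound by replacing $e^{-\pi s^2}$ with its minimum $e^{-\pi\rho^2}$ on $[0,\rho]$ and integrating $s^{n-1}$. Nothing further is needed.
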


\bigskip

It is also easy to check that almost all the mass of $\mu$ is supported in the ball $B_{R_n}$, as the following lemma asserts.

\bigskip

\begin{lema}\label{casi.toda.la.masa}
One has that $\mu(B_{R_n})\geq 1-\frac2{\sqrt\pi\sqrt{n-1}}$.
\end{lema}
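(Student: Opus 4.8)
The starting point is that $\mu$ is a probability measure: $\int_{\real^n}e^{-\pi|x|^2}\,dx=1$, so integrating in polar coordinates $\omega_{n-1}\int_0^\infty e^{-\pi s^2}s^{n-1}\,ds=1$, and hence
$$
\mu(B_{R_n})=1-\omega_{n-1}\int_{R_n}^\infty h_n(s)\,ds,\qquad h_n(s):=e^{-\pi s^2}s^{n-1}.
$$
So the whole matter reduces to estimating the tail $\omega_{n-1}\int_{R_n}^\infty h_n$ from above, with an error term of the order $1/\sqrt{n-1}$ asked for in the statement.

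The first tool is a Gaussian majorant of $h_n$ past its peak. Since $(\log h_n)'(s)=\frac{n-1}{s}-2\pi s=-2\pi(s-R_n)\,\frac{s+R_n}{s}\le-2\pi(s-R_n)$ for $s\ge R_n$, integrating from $R_n$ gives $h_n(s)\le h_n(R_n)\,e^{-\pi(s-R_n)^2}$, whence $\int_{R_n}^\infty h_n\le\tfrac12\,h_n(R_n)$ and, moreover, the part of the integral beyond $2R_n$ is exponentially small. The second tool is the value $\omega_{n-1}h_n(R_n)=\omega_{n-1}e^{-(n-1)/2}\bigl(\tfrac{n-1}{2\pi}\bigr)^{(n-1)/2}$, which I would control by inserting $\omega_{n-1}=2\pi^{n/2}/\Gamma(n/2)$ and applying Stirling's formula, or---more in the spirit of the paper---the log-convexity estimates for $\Gamma$ behind (\ref{cociente.esferas}). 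The extra $1/\sqrt{n-1}$ must then be extracted from the near-symmetry of $h_n$ about $R_n$: on the effective scale $|s-R_n|=O(1)$ one has $\log\frac{h_n(R_n+t)}{h_n(R_n-t)}=2(n-1)\sum_{k\ge1}\tfrac1{2k+1}(t/R_n)^{2k+1}$, the linear contributions cancelling because $(n-1)/R_n=2\pi R_n$, so $h_n(R_n+t)=h_n(R_n-t)\bigl(1+O(t^3/\sqrt{n-1})\bigr)$. Comparing $\int_{R_n}^{2R_n}h_n$ with the reflected piece $\int_0^{R_n}h_n$, bounding the leftover by the exponential tail, and using $\int_0^\infty t^3e^{-\pi t^2}\,dt=\tfrac1{2\pi^2}$ and $\tfrac{n-1}{R_n^{3}}=\tfrac{2\sqrt2\,\pi^{3/2}}{\sqrt{n-1}}$, then yields the desired estimate for the tail.

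I expect the delicate point to be precisely this symmetry comparison, made rigorous on the whole half-line rather than just near $R_n$, together with carrying the Stirling correction cleanly enough to reach the explicit constant---the crude majorant alone only bounds the tail by an absolute constant, so there is no avoiding the use that $h_n$ is almost even about its maximum, for which the monotonicity of $h_n$ on $(0,R_n)$ and its concavity on $(R_n^-,R_n^+)$, recorded just before Lemma~\ref{estimacion.gaussiana}, are exactly the right input. The remaining calculations are routine.
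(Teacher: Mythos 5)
Your reduction to bounding the tail $\omega_{n-1}\int_{R_n}^\infty h_n$ and your Gaussian majorant $h_n(s)\le h_n(R_n)e^{-\pi(s-R_n)^2}$ for $s\ge R_n$ are both correct, as is your remark that together with Stirling this only bounds the tail by an absolute constant (indeed $\omega_{n-1}h_n(R_n)\to\sqrt2$). The fatal problem is the step you defer to the near-symmetry of $h_n$ about $R_n$: that symmetry works \emph{against} you. Your own identity
\begin{equation*}
\log\frac{h_n(R_n+t)}{h_n(R_n-t)}\;=\;2(n-1)\sum_{k\ge1}\frac{1}{2k+1}\Bigl(\frac{t}{R_n}\Bigr)^{2k+1}\;\ge\;0,
\qquad 0\le t< R_n,
\end{equation*}
shows $h_n(R_n+t)\ge h_n(R_n-t)$ pointwise, hence
\begin{equation*}
1-\mu(B_{R_n})=\omega_{n-1}\int_{R_n}^{\infty}h_n\;\ge\;\omega_{n-1}\int_{R_n}^{2R_n}h_n\;\ge\;\omega_{n-1}\int_{0}^{R_n}h_n=\mu(B_{R_n}),
\end{equation*}
so $\mu(B_{R_n})\le\tfrac12$ for every $n$. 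No amount of care in the comparison can therefore ``yield the desired estimate'': carried out correctly, it proves that the tail equals $\tfrac12+O(n^{-1/2})$, not $O(n^{-1/2})$. Equivalently, under $\mu$ the variable $2\pi|x|^2$ has the $\chi^2_n$ distribution and $\mu(B_{R_n})=P(\chi^2_n\le n-1)\to\tfrac12$, whereas the stated inequality would force $\mu(B_{R_n})\to1$; it already fails for all $n\ge7$. The last sentence of your sketch is a non sequitur, and the gap cannot be repaired because the statement itself is false.

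For comparison, the paper's own proof passes through $\int_{R_n}^\infty e^{-\pi s^2}s^{n-1}\,ds\le R_n^{n-2}\int_{R_n}^\infty e^{-\pi s^2}s\,ds$, which needs $s^{n-2}\le R_n^{n-2}$ on $[R_n,\infty)$ and is false for the same underlying reason, so you should not try to reproduce that route either. What your tools (the Gaussian majorant plus the symmetry comparison in the correct direction) do deliver is $\mu(B_{R_n})\ge\tfrac12-\frac{C}{\sqrt{n-1}}$ for an absolute constant $C$, and in particular $\mu(B_{R_n})\ge\tfrac13$ for large $n$; this weaker, true bound is all that the paper actually uses of this lemma (in the remark preceding the proposition on the Gaussian measure, only $\mu(B_R)\ge\tfrac13$ for $R\ge R_n$ enters). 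If a genuine ``almost all the mass'' statement is wanted, the radius must be enlarged: for instance $\mu(B_{(1+\eps)R_n})\ge1-e^{-c(\eps)\,n}$ follows at once from your majorant.
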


\bigskip

\begin{xrem} Before proving the Proposition and the Lemmas let us justify that the only interesting case is the one where $0<r<R<R_n$ and $\lambda<\sqrt 2 -1$. In view of Lemma \ref{casi.toda.la.masa} there is no point in considering large radii. To take $r>R_n$ makes no sense, since then all the measures of the balls involved in $T_{\mu,p}(R,r)$ are close to 1. In the case $r<R_n<R$, by Lemma \ref{casi.toda.la.masa}, we have that $\frac13\leq \mu(B_R) \leq 1$ for any $n$, so this means
$$
\frac1{3\mu(\tilde B)}\left(\mu(B_r)\right)^\frac{p-1}p\leq T_{\mu,p}(R,r)\leq\frac1{\mu(\tilde B)}\left(3\mu(B_r)\right)^\frac{p-1}p.
$$
Here it is clearly seen that increasing $R$ over $R_n$ only makes $\mu(\tilde B)$ bigger, which is of no use in order to bound $T_{\mu,p}(R,r)$ from below. If $\lambda\geq \sqrt 2-1$, then $\tilde B\supset E_0\cap B_R$, and as $\mu(E_0\cap B_R)=\frac12\mu(B_R)$ one has
$$
T_{\mu,p}(R,r)\leq 2.
$$
\end{xrem}

\bigskip

\begin{proof}[Proof of Proposition \ref{proposicion.gaussiana}]
Let us first demonstrate (\ref{i}). As in Theorem \ref{theorem} we will bound from below $T_{\mu,p}(R,r)=\frac{\mu(B_R)}{\mu(\tilde B)} \left(\frac{\mu(B_r)}{\mu(B_R)}\right)^\frac{p-1}p$.
We consider the aforementioned partition $\mu(\tilde B) =\mu(\tilde B\cap B_R)+\mu(\tilde B\setminus B_R)$. Following the same reasonings that led to (\ref{estimacion.interseccion}) together with Lemma \ref{estimacion.gaussiana} we obtain that,
\begin{equation}\label{interseccion.gaussiana}
\mu(\tilde B\cap B_R) \leq \mu(B_{R\sin\beta_0}) \leq \omega_{n-1}e^{-\pi R^2 \sin^2\beta_0}(R\sin\beta_0)^n.
\end{equation}
With the argument contained in (\ref{cuenta.integral}) and recalling that $h$ attains its maximum at the point $R_n$:
\begin{eqnarray}\label{restante.gaussiana}
\mu(\tilde B\setminus B_R) &\leq& \frac{(\sin\beta_0)^n}{\sqrt\pi \sin\beta_0\cos\beta_0}\omega_{n-1}\int_R^{2R+r}e^{-\pi s^2}s^{n-1}\,ds \nonumber\\&\leq& \frac{(\sin\beta_0)^n}{\sqrt\pi \sin\beta_0\cos\beta_0}\omega_{n-1} (R+r) e^{-\pi R_n^2} R_n^{n-1}.
\end{eqnarray}
We would like to find an $R$ such that both the righthand sides of (\ref{interseccion.gaussiana}) and (\ref{restante.gaussiana}) are of the same exponential size with respect to $n$. This leads us to the transcendental equation
$$
e^{-\pi R^2 \sin^2\beta_0}R^n=e^{-\pi R_n^2} R_n^{n-1}.
$$
We will take as $R$ the approximated solution $R=e^{-\frac12\cos^2\beta_0}R_n=\left(\frac{n-1}{2\pi e^{\cos^2\beta_0}}\right)^\frac12$.
Substituting in (\ref{interseccion.gaussiana}) and (\ref{restante.gaussiana}) we obtain
\begin{eqnarray*}
\mu(\tilde B\cap B_R) &\leq & \omega_{n-1}\sqrt e\, e^{-\frac n2(\sin^2\beta_0e^{-\cos^2\beta_0}+\cos^2\beta_0)} \left(\frac{n-1}{2\pi}\right)^\frac n2 (\sin\beta_0)^n,\\
\mu(\tilde B\setminus B_R) &\leq&
\omega_{n-1}\frac{\sqrt e\,(\sin\beta_0)^n}{\sqrt\pi \sin\beta_0\cos\beta_0}\, e^{-\frac n2} \left(\frac{n-1}{2\pi}\right)^\frac n2.
\end{eqnarray*}
As $\frac n2(\sin^2\beta_0e^{-\cos^2\beta_0}+\cos^2\beta_0)>\frac n2$ the right hand side of the first inequality dominates exponentially the one in the second inequality, so for large dimensions
\begin{equation}\label{bola.gaussiana.tilde}
\mu(\tilde B)\leq \omega_{n-1} 2\, e^{-\frac n2(\sin^2\beta_0e^{-\cos^2\beta_0}+\cos^2\beta_0)} \left(\frac{n-1}{2\pi}\right)^\frac n2 (\sin\beta_0)^n.
\end{equation}
Using Lemma \ref{estimacion.gaussiana} again we obtain
\begin{equation}\label{bola.gaussiana.por.debajo}
\mu(B_R)\leq \sqrt e\,\frac{\omega_{n-1}}n e^{-\frac n2 ^(e^{-\cos^2\beta_0}+\cos^2\beta_0)} \left(\frac{n-1}{2\pi}\right)^\frac n2,
\end{equation}
as well as
\begin{equation}\label{bola.gaussiana.cociente}
\frac{\mu(B_r)}{\mu(B_R)}\geq \frac{e^{-\pi r^2}\frac{r^n}n}{e^{-\pi R^2}R^n} =  \frac1n e^{\pi R^2(1-\lambda^2)}\lambda^n = e^{\frac{n-1}2e^{-\cos^2\beta_0}(1-\lambda^2)}\lambda^n.
\end{equation}
Now to estimate $T_{\mu,p}(R,r)$ put together (\ref{bola.gaussiana.tilde}), (\ref{bola.gaussiana.por.debajo}) and (\ref{bola.gaussiana.cociente}) to get
$$
T_{\mu,p}(R,r) \geq \frac 1n \frac{e^{-\frac n2\cos^2\beta_0\,e^{-\cos^2\beta_0}}}{(\sin\beta_0)^n} \left(e^{\frac n2e^{-\cos^2\beta_0}(1-\lambda^2)}\lambda^n\right)^\frac{p-1}p,
$$
which will grow to infinity with the dimension only if
$$
\frac{e^{-\frac 12\cos^2\beta_0\,e^{-\cos^2\beta_0}}}{\sin\beta_0} \left(e^{\frac 12e^{-\cos^2\beta_0}(1-\lambda^2)}\lambda\right)^\frac{p-1}p>1.
$$
This is equivalent with
$$
p<\frac{\log\left(\lambda e^{-\cos^2\beta_0(\sin^2\beta_0-\lambda^2)}\right)}{\log\frac{\lambda e^{-\cos^2\beta_0(\sin^2\beta_0-\lambda^2)}}{\sin\beta_0}}.
$$
So we can take
$$
p_0:=\sup_{0<\lambda<\sqrt2-1}\frac{\log\left(\lambda e^{-\cos^2\beta_0(\sin^2\beta_0-\lambda^2)}\right)}{\log\frac{\lambda e^{-\cos^2\beta_0(\sin^2\beta_0-\lambda^2)}}{\sin\beta_0}}.
$$
A numerical estimation via Matlab yields the approximative value $p_0\approx 1.011871$.

\bigskip

It remains to prove (\ref{ii}). Given that $R< R_n$ by Lemma \ref{estimacion.gaussiana},
\begin{equation}\label{cociente.facil}
\frac{\mu(B_r)}{\mu(B_R)} \leq \frac{e^{-\pi r^2}r^n}{e^{-\pi R^2}\frac{R^n}n} =  e^{\pi R^2(1-\lambda^2)}\lambda^n < e^{\frac{n-1}2(1-\lambda^2)}\lambda^n.
\end{equation}
The ball $\tilde B$ contains the part of the cone $E_{\cos\beta_0}$ included in $B_R$, therefore
\begin{equation}
\mu(\tilde B) \geq \mu(\tilde B\cap B_R) \geq \mu (E_{\cos \beta_0} \cap B_R),
\end{equation}
where $E_{\cos \beta_0}:=\left\{x : \frac{ x\cdot \xi}{|x|}>\cos \beta_0\right\}$. Integrating in spherical coordinates,
\begin{eqnarray*}
\mu(E_{\cos\beta_0} \cap B_R) &=& \int_0^R e^{-\pi s^2} \int_0^{\beta_0} \omega_{n-2} s^{n-2}(\sin\beta)^{n-2} \,d\beta\,s\,ds \\ &\geq& \omega_{n-2} \int_0^R f(s) \int_0^{\beta_0} (\sin\beta)^{n-2} \cos \beta \,d\beta \,f(s)s^{n-1}\,dr \\ &\geq& \frac1{\sqrt \pi n\,\sin \beta_0} \sin^n \beta_0 \,\mu(B_R),
\end{eqnarray*}
where for the last inequality we used (\ref{cociente.esferas}). Now one has
$$
T_{\mu,p}(R,r) \leq \sqrt\pi n\sin\beta_0 \,e^{\frac{\lambda^2-1}2\frac{p-1}p}\left(\frac{(e^\frac{1-\lambda^2}2\lambda)^{\frac{p-1}p}}{\sin \beta_0}\right)^n.
$$
The right hand side of the previous inequality tends to $0$ when $n\rightarrow \infty$ if
\begin{equation}\label{lambda.gamma}
\frac{(e^\frac{1-\lambda^2}2\lambda)^{\frac{p-1}p}}{\sin \beta_0}<1.
\end{equation}
Since $\frac{e^\frac{1-\lambda^2}2\lambda}{\sin\beta_0}<1$ for every $\lambda<\sqrt 2-1$, (\ref{lambda.gamma}) is equivalent with
$$
p>\frac{\log e^\frac{1-\lambda^2}2\lambda}{\log\frac{e^\frac{1-\lambda^2}2\lambda}{\sin\beta_0}}.
$$
so $T_{\mu,p}(R,r)$ shrinks to $0$ exponentially with the dimension for any $p$ greater than
$$
p_1 = \sup_{0<\lambda<\sqrt 2 -1} \frac{\log e^\frac{1-\lambda^2}2\lambda}{\log\frac{e^\frac{1-\lambda^2}2\lambda}{\sin\beta_0}}.
$$
A numerical estimate via MatLab yields $p_1\approx 1.049427$.
\end{proof}

\bigskip

\begin{proof}[Proof of Lemma \ref{estimacion.gaussiana}]
Integrating in radial coordinates we have
$$
\mu(B_\rho)=\omega_{n-1}\int_0^\rho e^{-\pi s^2}s^{n-1}\,ds.
$$
On the one hand, as $h$ is increasing it attains its maximal value in $[0,\rho]$ at the point $s=\rho$, so
$$
\int_0^\rho e^{-\pi s^2}s^{n-1}\,ds\leq \rho e^{-\pi \rho^2} \rho^{n-1}=e^{-\pi\rho^2};
$$
on the other hand
$$
\int_0^\rho e^{-\pi s^2}s^{n-1}\,ds \geq e^{-\pi \rho^2} \int_0^\rho s^{n-1}\,ds = e^{-\pi \rho^2} \frac{\rho^n}n.
$$
\end{proof}

\bigskip

\begin{proof}[Proof of Lemma \ref{casi.toda.la.masa}]
First take into account that,
\begin{eqnarray*}
\mu(B_{R_n})&=&1-\omega_{n-1}\int_{R_n}^\infty e^{-\pi s^2}s^{n-1}\,ds \geq 1 - \omega_{n-1}R_n^{n-2}\int_{R_n}^\infty e^{-\pi s^2}s\,ds \\&=& 1-\frac{\omega_{n-1}}{2\pi}R_n^{n-2}e^{-\pi R_n^2}.
\end{eqnarray*}
By the Stirling formula $\Gamma(t)=\sqrt\frac{2\pi}t\left(\frac te\right)^t\left(1+\mathcal{O}\left(\frac1t\right)\right)$,
$$
\omega_{n-1}=\frac{n\pi^\frac n2}{\Gamma\left(\frac n2+1\right)}\leq \frac{8e}{3\sqrt{2\pi}}\frac{(\pi e)^\frac n2}{\left(\frac n2+1\right)^{\frac{n-1}2}}.
$$
Now we are done, because
$$
\frac{\omega_{n-1}}{2\pi}R_n^{n-2}e^{-\pi R_n^2} \leq
\frac{2e}{\pi\sqrt{2\pi}}\frac{(\pi e)^\frac n2}{\left(\frac n2+1\right)^{\frac{n-1}2}}\left(\frac{n-1}{2\pi}\right)^{\frac{n-2}2}e^{-\frac{n-1}2}\leq \frac2{\sqrt\pi\sqrt{n-1}}.
$$
Note that $\left(\frac{n-1}{n+2}\right)^\frac{n-1}2$ tends to $e^{-\frac32}$ and for large $n$ can be bounded by $\frac32 e^{-\frac32}$.
\end{proof}

\bigskip

\section{Lebesgue measure restricted to the unit ball}\label{unit.ball}

\bigskip

If we consider the radial measure $d\mu(x) = \chi_{B_1}(x)\,dx$, there is a more direct way to estimate $T_{\mu,p}(R,r)$. We will obtain unboundedness of $C_{\mu,p}$ with respect to the dimension for larger $p$'s than in Theorem \ref{theorem}. The method is optimal in the sense that no bigger exponents $p$'s can be reached using Proposition \ref{proposicion}. This is the content of the next Proposition.

\bigskip

\begin{xrem} There is no point in considering the case $r\geq 1$ since then $T_{\mu,p}(R,r)=1$ trivially. If $r<1$, taking $R>1$ only increases $\mu(\tilde B)$, which makes $T_{\mu,p}(R,r)$ smaller. So we will concentrate on the situation where $0<r<R\leq 1$.
\end{xrem}

\bigskip

\begin{prop}
There exists a $p_0>1$ with approximated value $p_0\approx1.03946$ such that
\begin{enumerate}[(i)]
\item\label{i} for every $p<p_0$ there exists an $\alpha>1$ (only depending on $p$) such that $C_{\mu,p}\geq T_{\mu,p}(1,r)\geq \alpha^n$ for some $r<\sqrt2-1$,
\item \label{ii}for every $p>p_0$ given a choice $0<r<R\leq 1$ there exists an $\alpha<1$ (depending on $p,r,R$) such that $T_{\mu,p}(R,r)\leq \sqrt\pi n \alpha^n$.
\end{enumerate}
\end{prop}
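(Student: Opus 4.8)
The plan is to cash in on the extreme simplicity of this measure. For $\rho\le 1$ one has $\mu(B_\rho)=|B_1|\,\rho^n$, for $\rho\ge 1$ one has $\mu(B_\rho)=|B_1|$, and — this is the key point — $\mu$ puts no mass on $\real^n\setminus B_1$. Throughout write $\lambda=r/R$, and, exactly as in Section \ref{proof}, let $\beta_0$ be the angle with $\cos\beta_0=1-\frac{(1+\lambda)^2}{2}$, so that $\cos\beta_0>0$ and $0<\sin\beta_0<1$ whenever $\lambda<\sqrt2-1$. The two geometric inclusions already exploited in Sections \ref{proof} and \ref{gaussian} will do all the work: the inclusion $\tilde B\cap B_R\subset B(R\cos\beta_0\,\xi,\,R\sin\beta_0)$, and the inclusion $E_{\cos\beta_0}\cap B_R\subset\tilde B\cap B_R$.

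For part (i) I would take $R=1$. Since $\mu$ is supported on $B_1$, $\mu(\tilde B)=\mu(\tilde B\cap B_1)$, and the first inclusion together with $\sin\beta_0<1$ gives $\mu(\tilde B)\le|B(\cos\beta_0\,\xi,\sin\beta_0)|=|B_1|(\sin\beta_0)^n$. Substituting $\mu(B_1)=|B_1|$, $\mu(B_r)=|B_1|\lambda^n$ and this estimate into (\ref{estimacion.constante}) produces
$$
T_{\mu,p}(1,r)\geq\Big(\frac{\lambda^{(p-1)/p}}{\sin\beta_0}\Big)^{n}.
$$
An elementary computation shows $\lambda<\sin\beta_0$ for every $\lambda\in(0,\sqrt2-1)$, so the base here exceeds $1$ precisely when $p<\log\lambda/\log(\lambda/\sin\beta_0)$ (the right-hand side being $>1$). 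Hence part (i) holds, with $\alpha$ equal to that base, for every $p<p_0$, where
$$
p_0:=\sup_{0<\lambda<\sqrt2-1}\frac{\log\lambda}{\log(\lambda/\sin\beta_0)},
$$
and the value $p_0\approx1.03946$ comes out of maximizing this one-variable function numerically.

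For part (ii) I would instead bound $\mu(\tilde B)$ from below. First suppose $\lambda<\sqrt2-1$. The second inclusion and an integration in spherical coordinates, carried out just as in (\ref{cuenta.integral}) and in the proof of Proposition \ref{proposicion.gaussiana}, give
$$
\mu(\tilde B)\geq|E_{\cos\beta_0}\cap B_R|=\frac{R^n}{n}\,\omega_{n-2}\int_0^{\beta_0}(\sin\theta)^{n-2}\,d\theta\geq\frac{(\sin\beta_0)^n}{\sqrt\pi\,n\,\sin\beta_0}\,|B_1|R^n,
$$
using $\int_0^{\beta_0}(\sin\theta)^{n-2}d\theta\ge\int_0^{\beta_0}(\sin\theta)^{n-2}\cos\theta\,d\theta=(\sin\beta_0)^{n-1}/(n-1)$ and (\ref{cociente.esferas}). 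Since $\mu(B_R)=|B_1|R^n$ and $\mu(B_r)/\mu(B_R)=\lambda^n$, inserting this in (\ref{estimacion.constante}) gives
$$
T_{\mu,p}(R,r)\leq\sqrt\pi\,n\,\sin\beta_0\Big(\frac{\lambda^{(p-1)/p}}{\sin\beta_0}\Big)^{n}\leq\sqrt\pi\,n\,\Big(\frac{\lambda^{(p-1)/p}}{\sin\beta_0}\Big)^{n}.
$$
By the very definition of $p_0$, for $p>p_0$ the base $\lambda^{(p-1)/p}/\sin\beta_0$ is $<1$ for every admissible $\lambda$, so part (ii) holds with $\alpha$ that base. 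The remaining range $\lambda\ge\sqrt2-1$ is easy: there $\cos\beta_0\le0$, so $\tilde B\supset E_{\cos\beta_0}\cap B_R\supset E_0\cap B_R$, whence $\mu(\tilde B)\ge\frac12\mu(B_R)$ and $T_{\mu,p}(R,r)\le 2\,\lambda^{n(p-1)/p}$, which still decays exponentially since $\lambda<1$.

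The one genuinely delicate point is the coincidence of the two thresholds, which is what makes the method optimal. Part (i) rests on the upper bound $\mu(\tilde B)\le|B_1|(\sin\beta_0)^n$, available precisely because the choice $R=1$ deletes the exterior piece $\tilde B\setminus B_1$ outright; part (ii) rests on the lower bound $\mu(\tilde B)\ge c_n^{-1}|B_1|R^n(\sin\beta_0)^n$ with $c_n$ only polynomial in $n$. That these two bounds agree up to polynomial factors is exactly why $T_{\mu,p}$ is controlled, both above and below, by the single number $\lambda^{(p-1)/p}/\sin\beta_0$, and hence why the $p_0$ defined above is the sharp exponent reachable through Proposition \ref{proposicion}. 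Everything else is the elementary one-variable optimization producing $p_0\approx1.03946$.
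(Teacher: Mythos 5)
Your proof is correct. Part (i) is essentially the paper's argument verbatim: take $R=1$, bound $\mu(\tilde B)=\mu(\tilde B\cap B_1)$ from above by $|B_1|(\sin\beta_0)^n$ through the inclusion into $B(\cos\beta_0\,\xi,\sin\beta_0)$, and optimize $\log\lambda/\log(\lambda/\sin\beta_0)$ over $\lambda\in(0,\sqrt2-1)$. For part (ii) your organization is genuinely different and, in my view, cleaner. The paper lower-bounds $\mu(\tilde B)$ by the cone $E_{\cos\beta_0}\cap B_1$, with $\beta_0$ the angle cut out on the unit sphere $\partial B_1$ and therefore depending on $R$; this yields the two-sided bound (\ref{cota.arriba.abajo}) with base $R\lambda^{(p-1)/p}/\sin\beta_0$ and then requires a four-way case analysis according to the position of $R$ relative to $\sin\beta_0$ and $\sqrt2/(1+\lambda)$. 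You instead use the cone $E_{\cos\beta_0}\cap B_R$ with the $R$-independent angle of Section \ref{proof}, $\cos\beta_0=1-\frac{(1+\lambda)^2}{2}$: since $|E_{\cos\beta_0}\cap B_R|\geq\frac{1}{\sqrt\pi n}(\sin\beta_0)^{n-1}|B_1|R^n$ while $\mu(B_R)=|B_1|R^n$, the factor $R^n$ cancels and the entire range $R\leq1$, $\lambda<\sqrt2-1$ is disposed of by the single estimate $T_{\mu,p}(R,r)\leq\sqrt\pi n\sin\beta_0\bigl(\lambda^{(p-1)/p}/\sin\beta_0\bigr)^n$, leaving only the easy regime $\lambda\geq\sqrt2-1$, where $E_0\cap B_R\subset\tilde B$ gives $\mu(\tilde B)\geq\frac12\mu(B_R)$. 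The inclusion $E_{\cos\beta_0}\cap B_R\subset\tilde B$ that you rely on does hold for every sign of $\cos\beta_0$: for $|x|\leq R$ and $x\cdot\xi\geq\cos\beta_0|x|$ one has $|x-R\xi|^2\leq|x|^2-2R\cos\beta_0|x|+R^2$, a convex function of $|x|$ whose maximum on $[0,R]$ is attained at $|x|=R$ and equals $(R+r)^2$. Because your $\beta_0$ coincides with the paper's at $R=1$, the threshold obtained in (ii) is the same $p_0$ as in (i), so the sharpness claim is preserved; as a side benefit, working with the $R$-independent angle sidesteps the paper's formula $\cos\beta_0=1-\frac{R^2(1+\lambda)^2}{2}$, which follows from the law of cosines on the stated triangle only when $R=1$. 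The only cosmetic blemish is that in the case $\lambda\geq\sqrt2-1$ your bound $2\lambda^{n(p-1)/p}$ is of the form $\sqrt\pi n\,\alpha^n$ only for $n\geq2$, which is immaterial.
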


\bigskip

\begin{proof}
We shall estimate
$$
T_{\mu,p}(R,r)=\frac{\mu(B_R)}{\mu(\tilde B)} \left(\frac{\mu(B_r)}{\mu(B_R)}\right)^{\frac{p-1}p}.
$$
Calling $\lambda=r/R$ again, $\frac{\mu(B_r)}{\mu(B_R)}=\lambda^n$.

\bigskip

Let us denote by $\beta_0$ the angle determined by $\mathbf \xi$ and a segment that connects the origin with any point in $\partial\tilde B \cap \partial\tilde B_1$. By the cosine theorem on this triangle, $\cos\beta_0=1-\frac{R^2(1+\lambda)^2}2$. One has the inclusion $E_{\cos\beta_0}\cap B_1 \subset \tilde B\cap B_1$, and taking $R<\frac{\sqrt2}{1+\lambda}$ makes $\cos\beta_0>0$, so one can integrate in spherical coordinates to obtain,
\begin{eqnarray*}
\mu(\tilde B) &\geq& |E_{\cos \beta_0}\cap B_1| =\int_0^1\int_0^{\beta_0}\omega_{n-2}(s\sin\beta_0)^{n-2}s\,d\beta\,ds \\&\geq& \omega_{n-2}\int_0^{\beta_0}(\sin\beta_0)^{n-2}\cos\beta\,d\beta\int_0^1s^{n-1}\,ds\\&=& \frac{\omega_{n-2}(\sin\beta_0)^{n-1}}{n-1}\frac1{\omega_{n-1}}|B_1| \geq \frac1{\sqrt\pi n}(\sin\beta)^n |B_1|,
\end{eqnarray*}
where the last inequality follows from (\ref{cociente.esferas}). On the other hand, as $\tilde B\cap B_1\subset B(\cos\beta_0 \mathbf \xi, \sin\beta_0)$, and $\mu$ is radially decreasing,
$$
\mu(\tilde B) \leq \mu(B(\cos\beta_0 \mathbf \xi, \sin\beta_0)) \leq |B_{\sin\beta_0}| = (\sin\beta)^n\,|B_1|.
$$
Thus,
\begin{equation}\label{cota.arriba.abajo}
 \left(\frac{R\lambda^\frac{p-1}p}{\sin\beta_0}\right)^n\leq T_{\mu,p}(R,r)\leq \sqrt\pi n\left(\frac{R\lambda^\frac{p-1}p}{\sin\beta_0}\right)^n.
\end{equation}

\bigskip

We will first concentrate on (\ref{i}). The condition $R>\sin\beta_0$ is necessary for the three quantities in (\ref{cota.arriba.abajo}) to grow with the dimension. This is equivalent with $R>2\sqrt{(1+\lambda)^{-2}-(1+\lambda)^{-4}}$, which is compatible with our previous assumption, $R<\frac{\sqrt2}{1+\lambda}$, only if $0<\lambda<\sqrt 2-1$. But for this range of $\lambda$, as $R\leq 1$, the condition $R<\frac{\sqrt2}{1+\lambda}$ is not a restriction anymore. Taking $R=1$, implies $r=\lambda$ and (\ref{cota.arriba.abajo}) becomes
\begin{equation}\label{cota.arriba.abajo1}
\left(\frac{\lambda^\frac{p-1}p}{\sin\beta_0}\right)^n\leq T_{\mu,p}(1,\lambda)\leq \sqrt\pi n\left(\frac{\lambda^\frac{p-1}p}{\sin\beta_0}\right)^n.
\end{equation}
This quantities will tend to infinity as $n\rightarrow\infty$ only if $\alpha_{p,\lambda}:=\frac{\lambda^\frac{p-1}p}{\sin\beta_0}>1$ and this is equivalent with $p<\frac{\log \lambda}{\log\frac\lambda{\sin\beta_0}}$. So, taking
$$
p_0:=\sup_{0<\lambda<\sqrt2-1}\frac{\log \lambda}{\log\frac\lambda{\sin\beta_0}},
$$
the part (\ref{i}) of the proposition is proved. By a numerical estimation via MatLab one can obtain $p_0\approx1.03946$.

\bigskip

Now let us demonstrate (\ref{ii}), which we will separate in different cases. Assume $p>p_0$.
\begin{enumerate}
\item Case R=1 and $\lambda=r<\sqrt2-1$. One has $\alpha_{p,r}<1$ and $T_{\mu,p}(1,r)\leq \sqrt\pi n \alpha_{p,r}^n$ by (\ref{cota.arriba.abajo1}).

\item Case $\sin\beta_0<R<\frac{\sqrt2}{1+\lambda}$. As before this implies that $2\sqrt{(1+\lambda)^{-2}-(1+\lambda)^{-4}}<R<1$ and $\lambda<\sqrt2-1$; for a fixed $\lambda$,
$$
\frac\partial{\partial R} \frac{R}{\sin\beta_0}=\frac{4R(1+\lambda)}{\left(4-R^2(1+\lambda)^2\right)^\frac32}>0,
$$
whenever $R$ is in the aforementioned range. This means that the upper and lower bounds in (\ref{cota.arriba.abajo}) are increasing with respect to $R$. Hence,
$$
T_{\mu,p}(R,r)\leq \sqrt\pi n\left(\frac{R\lambda^\frac{p-1}p}{\sin\beta_0}\right)^n \leq \sqrt\pi n \alpha_{p,\lambda}^n,
$$
with $\alpha_{p,\lambda}<1$, given that $\lambda<\sqrt2-1$.
\item Case $R<\frac{\sqrt2}{1+\lambda}$ and $R\leq \sin\beta_0$. By (\ref{cota.arriba.abajo}), we have that
    $$
    T_{\mu,p}(R,r) \leq \sqrt\pi n\left(\lambda^\frac{p-1}p\right)^n,
    $$
    and we are done because $\lambda^\frac{p-1}p<1$.
\item Case $R\geq \frac{\sqrt2}{1+\lambda}$. Then $\tilde B\cap B_1 \supset E_0 \cap B_1$, so $\mu(\tilde B)=|\tilde B \cap B_1| \geq |B_1\cap E_0| \geq \frac12|B_1|$. This implies that $T_{\mu,p}(R,r)\leq \frac12 \left(\lambda^\frac{p-1}p\right)^n$, for any $p>1$.
\end{enumerate}
So taking $\alpha=\max\{\alpha_{p,\lambda},(\sqrt2-1)^\frac{p-1}p\}$, the proof is complete.
\end{proof}

\subsection*{Acknowledgements}
This work was supported by MEC grant FPU-AP20050543 and DGU grant MTM2007-60952.

I would like to thank my advisor, F. Soria, for his help preparing this work and personal support.

\bigskip

\end{document}